\newtheorem{theorem}{Theorem}[section]
\newtheorem{lemma}[theorem]{Lemma}
\newtheorem{corollary}[theorem]{Corollary}
\newtheorem{problem}[theorem]{Problem}
\newtheorem{proposition}[theorem]{Proposition}
\newtheorem{claim}{Claim}[theorem]
\theoremstyle{definition}
\newtheorem{definition}[theorem]{Definition}
\theoremstyle{remark}
\newcommand{\itemprefix}{}
\newcommand{\myitem}{%
\item\protected@edef\@currentlabel{\itemprefix\theenumi}%
}
\renewcommand{\>}{\right\rangle}
\def\int{\mathop{\operator@font int}\nolimits}
\author[I. Juh\'asz]{Istv\'an Juh\'asz}
\address      {HUN-REN Alfr\'ed Rényi Institute of Mathematics}
\email{juhasz@renyi.hu}
\author[J. van Mill]{Jan van Mill}
\address{University of Amsterdam}
\email{j.vanMill@uva.nl}
\subjclass[2020]{54A25, 54A35, 54D45}
\keywords{$\sigma$-compact, countably tight, discrete reflexibility, L-space, remote point}
\title[$\sigma$-compact spaces]{Countable tightness is not discretely reflexive in $\sigma$-compact spaces}
\begin{document}

\begin{abstract}
Answering a question raised by V. V. Tkachuk in \cite{VTk}, we present several examples of $\sigma$-compact spaces,
some only consistent and some in ZFC, that are not countably tight but in which the closure of any
discrete subset is countably tight. In fact, in some of our examples the closures of all discrete subsets
are even first countable.
\end{abstract}

\maketitle

\section{Introduction}

A topological property $\mathcal{P}$ is said to be {\em discretely reflexive} in a class $\mathcal{C}$ of spaces if any $X \in \mathcal{C}$
has property $\mathcal{P}$, whenever the closures of all discrete subsets of $X$ have property $\mathcal{P}$.
This concept had been introduced in \cite{ATW} where many interesting instances of it were established.
For instance, it was proved there that both countable character and countable tightness are discretely reflexive in the class
of compact $T_2$-spaces.

Much more recently V. V. Tkachuk in  \cite{VTk} raised the question if countable tightness is discretely reflexive in the more general class
of $\sigma$-compact $T_3$-spaces. In this note we are going to present several counterexamples to this. In fact, in some of our examples of
not countably tight $\sigma$-compact $T_3$-spaces the closures of discrete subsets are even first countable. These examples, of course,
show that first countability also fails to be discretely reflexive in the class of $\sigma$-compact $T_3$-spaces. This has been known before,
by considering countable $T_3$-spaces of uncountable character in which all discrete subsets are closed. Our examples, being very different from these,
could be of some interest in this respect too.

Our set-theoretic and topological notation and terminology are standard. Those concerning cardinal functions follow \cite{J}.

\section{Consistent examples}

We start by describing an operator that assigns to certain CCC 0-dimensional $T_2$-spaces $X$ a one-point extensions of $\omega \times X$
that are also 0-dimensional and $T_2$.

\begin{definition}\label{df:L(X)}
Let $X$ be a CCC 0-dimensional $T_2$-space such that $X = \bigcup \{K_\alpha : \alpha < \omega_1\}$, where each $K_\alpha$ is a non-empty closed
nowhere dense set in $X$.

For each $\alpha < \omega_1$ fix a maximal disjoint collection $\{U_{\alpha, n} : n < \omega\}$ of subsets of $X \setminus K_\alpha$
that are clopen in $X$. Note that then $\bigcup \{U_{\alpha, n} : n < \omega\}$ is dense open in $X$.
Next, for each $\alpha < \omega_1$ and $n < \omega$ set $V_{\alpha, n} = \bigcup \{U_{\alpha, i} : i \le n\}$.

Finally, we let $$W_\alpha = \bigcup \{\{n\} \times V_{\alpha, n} : n < \omega\},$$
clearly these are clopen sets in $\omega \times X$.
\end{definition}

It is also clear that $W_\alpha \cap (\omega\times K_\alpha) = \emptyset$ for every $\alpha < \omega_1$, hence $\bigcap_{\alpha < \omega_1} W_\alpha = \emptyset$.

\medskip

The following lemma is needed to complete the definition of the promised one-point extension of $\omega \times X$.

\begin{lemma}\label{lm:cent}
The family $\{W_\alpha : \alpha < \omega_1\}$ of clopen subsets in $\omega \times X$ is centered and has empty intersection.
\end{lemma}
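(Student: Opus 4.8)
The empty-intersection half of the claim has essentially already been recorded: since $X = \bigcup_{\alpha<\omega_1} K_\alpha$, every point $x \in X$ lies in some $K_\beta$, and then $(n,x) \notin W_\beta$ for every $n$ because $W_\beta \cap (\omega \times K_\beta) = \emptyset$; hence no point of $\omega \times X$ belongs to all the $W_\alpha$. So the real work lies in proving that the family is centered. The first thing I would do is rewrite membership in the $W_\alpha$ in a usable form: directly from the definition, $(n,x) \in W_\alpha$ if and only if $x \in V_{\alpha,n}$. Consequently, for a finite set $F \subseteq \omega_1$ one has $(n,x) \in \bigcap_{\alpha \in F} W_\alpha$ exactly when $x \in \bigcap_{\alpha\in F} V_{\alpha,n}$, so it suffices to produce a single level $n < \omega$ together with a point $x$ lying in $V_{\alpha,n}$ for all $\alpha \in F$ simultaneously.

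The engine of the argument is the classical fact that a finite intersection of dense open sets is again dense (and open). For each $\alpha$ put $G_\alpha = \bigcup_{n<\omega} U_{\alpha,n}$; as noted in Definition \ref{df:L(X)}, this set is dense open in $X$. Hence for a finite $F$ the set $\bigcap_{\alpha\in F} G_\alpha$ is dense in $X$, and in particular nonempty, since $X \neq \emptyset$ (each $K_\alpha$ is nonempty). I would fix a point $y$ in this intersection.

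It then remains to convert this single point into membership at a common level, and here the monotonicity built into the $V_{\alpha,n}$ does the job. For each $\alpha \in F$, since $y \in G_\alpha = \bigcup_n U_{\alpha,n}$, choose $n_\alpha$ with $y \in U_{\alpha,n_\alpha} \subseteq V_{\alpha,n_\alpha}$, and set $n = \max\{n_\alpha : \alpha \in F\}$. Because $V_{\alpha,m}$ increases with $m$, we get $y \in V_{\alpha,n_\alpha} \subseteq V_{\alpha,n}$ for every $\alpha \in F$, and therefore $(n,y) \in \bigcap_{\alpha\in F} W_\alpha$, establishing centeredness.

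I do not expect a serious obstacle here; the only points requiring care are the two small reductions — translating $W_\alpha$-membership into the $V_{\alpha,n}$, and aligning the finitely many indices $n_\alpha$ to a single level via their maximum. It is worth observing that the CCC hypothesis plays no role in this particular lemma: it is used only earlier, to guarantee that each maximal disjoint clopen family $\{U_{\alpha,n} : n<\omega\}$ is countable and hence indexable by $\omega$. The density of $\bigcup_n U_{\alpha,n}$, which is all the topological input the proof actually consumes, is exactly the content of the note accompanying the definition.
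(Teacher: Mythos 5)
Your proof is correct. The empty-intersection half matches the paper's (both just invoke the observation that $W_\beta$ misses $\omega\times K_\beta$ and $X=\bigcup_\alpha K_\alpha$), but your centeredness argument is organized differently from the paper's. The paper proceeds by induction on the number of sets: assuming $W=W_{\alpha_1}\cap\dots\cap W_{\alpha_{n-1}}\neq\emptyset$, it extracts a nonempty clopen set $U$ with $\{k\}\times U\subset W$ at some level $k$, propagates $U$ to all higher levels via the monotonicity of the $V_{\alpha,m}$, and then uses density of $\bigcup_m V_{\alpha_n,m}$ to meet $W_{\alpha_n}$. You instead argue directly: a finite intersection of the dense open sets $G_\alpha=\bigcup_n U_{\alpha,n}$ is nonempty, so you can pick a single point $y$ lying in all of them, and then align the finitely many witnessing indices $n_\alpha$ to their maximum using the same monotonicity. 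The two proofs consume exactly the same topological input (density of $\bigcup_n U_{\alpha,n}$ and the increasing nature of the $V_{\alpha,n}$), but yours avoids the induction and works with a point rather than a clopen set, which makes it shorter and arguably cleaner for the lemma as stated; the paper's set-level formulation is the one that generalizes toward remote-point constructions, where one needs the trace of the filter on each level to be a nonempty open set rather than merely a point. Your closing remark about where CCC enters (only to make the maximal disjoint families countable) is also accurate.
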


\begin{proof}
We prove by induction on $n < \omega$ that if $\alpha_1 < \cdot\cdot\cdot < \alpha_n < \omega_1$ then
$$W_{\alpha_1} \cap \cdot\cdot\cdot \cap W_{\alpha_n} \ne \emptyset.$$
Indeed, if $W = W_{\alpha_1} \cap \cdot\cdot\cdot \cap W_{\alpha_{n-1}} \ne \emptyset$, then there is some $k < \omega$
with $W \cap (\{k\} \times X) \ne \emptyset$, so $W \cap (\{k\} \times X) = \{k\} \times U$ for some non-empty clopen $U$ in $X$.
Since, for any $\alpha$, the $V_{\alpha, m}$'s are increasing in $m$, this implies $W \cap (\{m\} \times X) \supset \{m\} \times U$
whenever $k < m <\omega$.

Since $\bigcup \{V_{\alpha, m} : m < \omega\}$ is dense in $X$,
it is clear from our definitions that then for any $\alpha < \omega_1$ there is some $M_\alpha < \omega$  such that
$U \cap V_{\alpha, m} \ne \emptyset$ whenever $M_\alpha \le m < \omega$. Consequently, if $m \ge \max \{k, M_{\alpha_n}\}$
then we have $W \cap W_{\alpha_n} \cap (\{m\}  \times V_{\alpha_n, m} ) \ne \emptyset$.

That $\bigcap_{\alpha < \omega_1} W_\alpha=\emptyset$, was already observed.
\end{proof}

Now, let $\mathcal{W}$ be the collection of all finite intersections of the $W_\alpha$'s.
Then $\bigcap \{W_\alpha : \alpha < \omega_1\} = \emptyset$ implies $\bigcap \mathcal{W} = \emptyset$, hence if we fix
a point $p \notin \omega \times X$ and declare the family $\{\{p\} \cup W : W \in \mathcal{W}\}$ to be a neighborhood base
for $p$ in  $L(X) = (\omega \times X) \cup \{p\}$, then $L(X)$ is indeed a 0-dimensional $T_2$-space.

Note that if the family $\{K_\alpha : \alpha < \omega_1\}$ of closed
nowhere dense subsets of $X$ is increasing, i.e. $\alpha < \beta < \omega_1$ implies $K_\alpha \subset K_\beta$, then
every countable subset $S$ of $\omega \times X$ is included in $\omega \times K_\alpha$ for some $\alpha < \omega_1$,
hence $S \cap W_\alpha = \emptyset$. So, in this case $p \notin \overline{S}$,
the closure taken in $L(X)$, and so we have $\chi(p, L(X)) = t(p, L(X)) = \omega_1$.

The idea for constructing filter bases like $\mathcal{W}$ goes back to old results on remote points, see for example 
\cite{D} and  \cite{D}.

\medskip

We are now ready to present our first result that yields some consistent counterexamples to Tkachuk's question.

\begin{theorem}\label{tm:cptL}
If there is a 0-dimensional compact $L$-space then there is a $\sigma$-compact hereditarily Lindelöf (in short: HL) 0-dimensional $T_2$-space
that is not countably tight but in which the closure of any discrete subset is
first countable.
\end{theorem}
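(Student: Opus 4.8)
The plan is to apply the operator $X \mapsto L(X)$ of Definition \ref{df:L(X)} to a suitably prepared $0$-dimensional compact $L$-space. Fix such a space and record its relevant features: being compact and hereditarily Lindelöf it is regular with every open set an $F_\sigma$, hence perfectly normal, and since in a compact Hausdorff space every $G_\delta$ point has countable character, it is first countable; it is CCC because $c(X)\le s(X)\le hL(X)=\omega$; and it is not separable, so $d(X)\ge\omega_1$. These four properties — compact, first countable, CCC, and not separable — drive everything.

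The main preparatory step, and the principal obstacle, is to produce an increasing chain $\langle K_\alpha : \alpha<\omega_1\rangle$ of non-empty closed nowhere dense sets with $\bigcup_{\alpha<\omega_1}K_\alpha=X$, so that Definition \ref{df:L(X)} and the remark following it apply. I would first reduce to the case that $X$ is nowhere separable: the union $V$ of all separable open subsets is Lindelöf, hence (extracting a countable subcover) itself a separable open set, so $F=X\setminus V$ is a non-empty closed subspace in which no non-empty relatively open set is separable, and $F$ is again a $0$-dimensional compact $L$-space. I would then fix a dense left-separated enumeration $\{x_\alpha:\alpha<\omega_1\}$ and set $K_\alpha=\overline{\{x_\beta:\beta\le\alpha\}}$. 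Nowhere separability forces each $K_\alpha$ to be nowhere dense, since the closure of a countable set cannot contain a non-empty open set; the chain is increasing by construction; and coverage is automatic, because in a first countable space an increasing $\omega_1$-union of closed sets is again closed, so $\bigcup_\alpha K_\alpha$ is closed and contains the dense set $\{x_\alpha\}$, forcing it to equal $X$. The delicate point concealed here is arranging simultaneously that $X$ is nowhere separable and carries a dense set of size exactly $\omega_1$ (equivalently $d(X)=\omega_1$); I expect this to be exactly where the structure of the given compact $L$-space, and the ambient consistency assumption, are genuinely used.

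Granting the chain, I would form $L(X)=(\omega\times X)\cup\{p\}$. By Lemma \ref{lm:cent} and the discussion after it, $L(X)$ is a $0$-dimensional $T_2$-space, and since the chain is increasing we have $t(p,L(X))=\chi(p,L(X))=\omega_1$, so $L(X)$ is not countably tight. For $\sigma$-compactness I would verify that each $C_n=\{p\}\cup\bigcup_{k\le n}(\{k\}\times X)$ is compact: a basic neighbourhood $\{p\}\cup W$ of $p$, with $W$ a finite intersection of the $W_\alpha$'s, omits from each slice $\{k\}\times X$ only a set of the form $\{k\}\times(X\setminus V)$ with $V$ clopen, hence compact; as there are finitely many slices, any open cover admits a finite subcover, and $L(X)=\bigcup_n C_n$. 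Hereditary Lindelöfness passes from $X$ to $\omega\times X$ (a countable free sum of HL spaces) and then to $L(X)$, since any open cover of a subspace containing $p$ has a member absorbing a neighbourhood of $p$, leaving a Lindelöf remainder.

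Finally, for the closures of discrete sets: because $L(X)$ is HL it has countable spread, so every discrete $D\subseteq L(X)$ is countable. Then $D\setminus\{p\}$ projects to a countable subset of $X$, which by increasingness lies in some $\omega\times K_\alpha$; hence $(D\setminus\{p\})\cap W_\alpha=\emptyset$ and $p\notin\overline{D\setminus\{p\}}$. Thus $\overline{D}$ is contained in $\omega\times X$ together with, at most, $p$ occurring as an isolated point of $\overline{D}$; since $X$ is first countable, so is $\omega\times X$, and therefore $\overline{D}$ is first countable. This yields a $\sigma$-compact, hereditarily Lindelöf, $0$-dimensional $T_2$-space that is not countably tight yet in which the closure of every discrete set is first countable, as required.
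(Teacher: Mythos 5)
Your argument is, in outline and in almost every detail, the paper's own proof: reduce to a nowhere separable compact HL space, enumerate a dense set of size $\omega_1$, set $K_\alpha=\overline{\{x_\beta:\beta\le\alpha\}}$, and feed the resulting increasing chain into Definition~\ref{df:L(X)}; all the verifications you add ($\sigma$-compactness, hereditary Lindel\"ofness of $L(X)$, $t(p,L(X))=\omega_1$, first countability of closures of discrete sets) are correct and in fact more detailed than the paper's. The one genuine gap is exactly the step you flag and leave open: why does $X$ have a dense subset of cardinality $\omega_1$, i.e., why is $d(X)\le\omega_1$? Your guess that this is ``where the ambient consistency assumption is genuinely used'' is a mis-diagnosis: it is a ZFC theorem of Shapirovskii (the paper cites \cite{Sh1}, see 3.13 of \cite{J}) that a non-separable compact $T_2$ hereditarily Lindel\"of space has density exactly $\omega_1$; the consistency strength is spent entirely on the existence of the compact $L$-space itself. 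The gap is not self-repairing either: your fallback of a ``dense left-separated enumeration'' of length $\omega_1$ does not work on its own, since nothing a priori stops the left-separating recursion $x_\alpha\notin\overline{\{x_\beta:\beta<\alpha\}}$ from running past $\omega_1$ without ever producing a dense set, so the density bound really is a needed input rather than a byproduct of the construction.

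Two minor remarks, neither affecting correctness. For $\sigma$-compactness you can skip the sets $C_n$ altogether and write $L(X)=\{p\}\cup\bigcup_{n<\omega}(\{n\}\times X)$ as a countable union of compact sets. And once $d(X)\le\omega_1$ is available, any enumeration of a dense set works: left-separation plays no role, since nowhere separability already guarantees that each $K_\alpha$, being the closure of a countable set, is nowhere dense.
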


\begin{proof}
Let $X$ be a non-separable  HL 0-dimensional compact $T_2$-space.
By a classical result of Shapirovskii from \cite{Sh1}, see also 3.13 of \cite{J}, then $d(X) = \omega_1$.
By deducting from $X$ the union of all separable open subsets, that is separable by the HL property,
we may assume that $X$ is nowhere separable, i.e. every countable set in $X$ is nowhere dense.

Now, let $\{x_\alpha : \alpha < \omega_1\}$ be a dense subset of $X$. Then $K_\alpha = \overline{\{x_\beta : \beta \le \alpha\}}$
is a non-empty closed nowhere dense set in $X$, moreover $X = \bigcup \{K_\alpha : \alpha < \omega_1\}$
because HL compact $T_2$-spaces are first countable.

Thus we have all the assumptions of Definition \ref{df:L(X)} satisfied, so we may take $L(X)$ that is now clearly $\sigma$-compact
and HL.  As the $K_\alpha$'s are increasing and every discrete subset of $\omega \times X$ is countable, we may immediately
conclude that the closure of every discrete subset of $L(X)$ is first countable, while $t(p, L(X)) = \omega_1$,
hence $L(X)$ is not countably tight.
\end{proof}

Both the ``double arrow modification" of a compact Suslin line and Kunen's compact L-space constructed under the Continuum Hypothesis in \cite{Ku}
are 0-dimensional compact $L$-spaces, hence we arrive at the following corollary of Theorem \ref{tm:cptL}.

\begin{corollary}
If there is a Suslin line or the Continuum Hypothesis holds then there is a $\sigma$-compact and HL $T_3$-space
in which the closure of every discrete subset is first countable but is not countably tight.
\end{corollary}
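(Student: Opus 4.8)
The strategy is to reduce the corollary to Theorem \ref{tm:cptL} by producing, under each of the two hypotheses separately, a $0$-dimensional compact $L$-space, i.e. a $0$-dimensional compact $T_2$-space that is hereditarily Lindelöf but not separable. Such a space is exactly the input required by Theorem \ref{tm:cptL} (whose proof used only non-separability together with the HL, compact and $0$-dimensional properties), and since a $0$-dimensional $T_2$-space is $T_3$, feeding it in immediately yields a $\sigma$-compact HL $T_3$-space that is not countably tight but in which the closure of every discrete subset is first countable. Thus the whole task is to justify that the two constructions named just before the corollary really are $0$-dimensional compact $L$-spaces.

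Under the Continuum Hypothesis I would appeal directly to Kunen's construction in \cite{Ku}, which produces a compact $L$-space. The only supplementary point is $0$-dimensionality, and this is read off from the construction: Kunen's space is totally disconnected, so, being compact Hausdorff, it has a clopen base and is $0$-dimensional. Theorem \ref{tm:cptL} then applies verbatim.

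Under the Suslin hypothesis, a Suslin line provides a \emph{Suslin continuum}: a compact, connected, CCC, non-separable linearly ordered space $S$. As is well known, $S$ is hereditarily Lindelöf but not separable, hence an $L$-space; being connected, however, it is as far as possible from $0$-dimensional. I would therefore pass to its \emph{double arrow modification} $S^{\ast}$, obtained by splitting each point $x \in S$ into a pair $x^{-} < x^{+}$ and topologizing $S^{\ast}$ as the resulting compact LOTS. Then $S^{\ast}$ is totally disconnected and hence $0$-dimensional, since each of its defining order intervals is clopen. Non-separability is easy: the natural order-preserving surjection $\pi \from S^{\ast} \to S$ sending $x^{\pm}$ to $x$ is continuous, so a countable subset of $S^{\ast}$ maps onto a countable subset of $S$, which cannot be dense; hence $S^{\ast}$ has no countable dense set.

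The main obstacle is to show that splitting the points of $S$ preserves hereditary Lindelöfness, so that $S^{\ast}$ is genuinely an $L$-space. Since $S^{\ast}$ is compact, and a compact Hausdorff space is HL precisely when it is perfectly normal, it suffices to prove that every closed subset of $S^{\ast}$ is a $G_{\delta}$. Here I would exploit that $S$ itself is perfectly normal, being compact and HL, together with the very simple structure of $\pi$, whose fibres have at most two points. Concretely, the two ``halves'' $A = \{x^{-} : x \in S\}$ and $B = \{x^{+} : x \in S\}$ are each homeomorphic to a one-sided Sorgenfrey-type refinement of $S$, and $S^{\ast}$ is the union of these two subspaces; I would argue that each half inherits hereditary Lindelöfness from the perfect normality of $S$, and that a union of two HL subspaces is HL. Once $S^{\ast}$ is confirmed to be a $0$-dimensional compact $L$-space, Theorem \ref{tm:cptL} completes the proof in this case as well.
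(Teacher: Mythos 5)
Your proposal follows exactly the paper's route: the paper proves this corollary simply by observing that Kunen's CH space and the ``double arrow modification'' of a compact Suslin line are both 0-dimensional compact $L$-spaces and then invoking Theorem \ref{tm:cptL}. You actually supply more detail than the paper (which asserts these two facts without proof); the one step you leave as a sketch --- that each half of the split Suslin continuum is hereditarily Lindel\"of, so the split is still an $L$-space --- is precisely the classical fact the paper also takes for granted.
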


Using J. Moore's ZFC L-space, it was noted in \cite{VTk} that countable tightness is not discretely reflexive
in HL $T_3$-spaces. So, our above result is an, albeit only consistent, strengthening of this.

We also mention here that for an appropriate version $X$ of Moore's ZFC L-space, that is 0-dimensional, we may
conclude that $L(X)$ is a 0-dimensional $T_2$-space of weight $\omega_1$ that is not discretely generated
because $p$ is not in the closure of any discrete, hence countable, subset of $\omega \times X$.
This fact actually was noted already in \cite{DTTW}, except that when this was published the existence
of an L-space in ZFC was still not known.

Finally, we mention that a more general version of the operator $L(X)$ can be given for appropriate Tykhonov~$X$.
This would give a stronger version of Theorem \ref{tm:cptL} by omitting the assumption of 0-dimensionality.
However, the fact is that the examples of compact L-spaces that e know immediately give us the existence of
0-dimensional such spaces. So, we decided to restrict the definition of $L(X)$ for 0-dimensional $X$ because
this case is much simpler.

It had been known for long, much earlier than J. Moore found his ZFC L-space, that the
existence of an L-space implies that of a 0-dimensional one.
This lead us to raise the following question.

\begin{problem}
Does the existence of a compact L-space imply that of a 0-dimensional one?
\end{problem}

\section{ZFC examples}

For simplicity, we assume in this section that all spaces are Tykhonov.

We start by recalling the following deep
result of A. Dow in \cite{D}: If $X$ is any non-pseudocompact CCC space with $\pi(X) \le \omega_1$
then $X$ has a remote point, i.e. a point $p \in \beta X \setminus X$ that is not in the closure
of any nowhere dense subset of $X$. From this we can easily deduce the following proposition.

\begin{proposition}\label{pr:rem}
Let $X$ be a nowhere separable, non-pseudocompact,  CCC space of $\pi$-weight $\pi(X) \le \omega_1$.
Then $X$ has a one-point extension $Y = X \cup \{p\}$ such that
$p \notin \overline{A}$ for any $A \subset X$ that is countable or discrete.
\end{proposition}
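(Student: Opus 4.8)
The plan is to obtain the point $p$ directly from Dow's remote point theorem quoted above and then verify that both countable and discrete subsets of $X$ are nowhere dense, so that the single defining property of a remote point applies to all of them simultaneously.

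First I would apply Dow's result: since $X$ is a non-pseudocompact CCC space with $\pi(X) \le \omega_1$, it has a remote point $p \in \beta X \setminus X$, i.e. a point of $\beta X$ lying in the closure (taken in $\beta X$) of no nowhere dense subset of $X$. I then set $Y = X \cup \{p\}$, viewed as a subspace of $\beta X$; this is a Tykhonov one-point extension of $X$, and for every $A \subseteq X$ the closure of $A$ in $Y$ is contained in its closure in $\beta X$. Hence it suffices to show that whenever $A \subseteq X$ is countable or discrete, $A$ is nowhere dense in $X$: the remote point property then yields $p \notin \overline{A}$ computed in $\beta X$, and a fortiori $p \notin \overline{A}$ computed in $Y$.

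For countable $A$ this is immediate, since nowhere separability of $X$ means precisely that every countable subset of $X$ is nowhere dense. The discrete case is the point requiring a genuine argument, and I expect it to be the main obstacle, because a CCC space may well carry uncountable discrete subspaces (for instance the anti-diagonal of the Sorgenfrey plane), so one cannot reduce it to the countable case by a mere cardinality count. Instead I would first note that $X$ has no isolated points, as each singleton is countable, hence nowhere dense, hence of empty interior. Then I would show that in a Hausdorff space without isolated points every discrete $A$ is nowhere dense: if some nonempty open $V$ were contained in $\overline{A}$, fix $a \in A \cap V$ and an open $U_a$ with $U_a \cap A = \{a\}$; density of $A$ in $V$ would make $\{a\}$ dense in the nonempty open set $U_a \cap V$, forcing $U_a \cap V = \{a\}$, so that $a$ would be isolated in $X$, a contradiction.

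Combining the two cases, every countable or discrete $A \subseteq X$ is nowhere dense, whence $p \notin \overline{A}$ in $Y$, which completes the argument. The only ingredient beyond Dow's theorem is the observation that discreteness together with the absence of isolated points already entails nowhere density, and it is exactly this that lets the single remote point avoid the closures of all discrete subsets as well as all countable ones.
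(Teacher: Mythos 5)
Your proposal is correct and follows essentially the same route as the paper: apply Dow's theorem to get a remote point $p$, take $Y = X \cup \{p\}$ in $\beta X$, and observe that countable sets are nowhere dense by nowhere separability while discrete sets are nowhere dense because the space is crowded. You merely spell out in full the crowdedness argument that the paper dispatches in a parenthetical remark.
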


\begin{proof}
By the above quoted result of A. Dow, $X$ has a remote point $p \in \beta X \setminus X$.
Clearly, then $Y = X \cup \{p\}$, taken as a subspace of $\beta X$, is as required.
(Note that any nowhere separable space is crowded, hence discrete subsets of it are nowhere dense.)
\end{proof}

Now, the $\sigma$-product $X = \{x \in 2^{\omega_1} : |x^{-1}(1)| < \omega\}$ in $2^{\omega_1}$ is
$\sigma$-compact, Fr\'echet-Urysohn, hence countably tight, moreover it satisfies all the
requirements of Proposition \ref{pr:rem}. It is non-pseudocompact, because it is non-compact.

Consequently, if $p$ is any remote point of $X$ then
$X \cup \{p\}$, taken as a subspace of $\beta X$, is a $\sigma$-compact space in which all closures
of discrete subsets are countably tight, while $X \cup \{p\}$ is not.

Thus we indeed have a rather simple ZFC example showing that countable tightness is not discretely
reflexive in the class of $\sigma$-compact spaces.

The consistent examples from the previous section had the stronger property that the closures
of discrete subsets were even first countable. It is easy to find discrete subsets in
the $\sigma$-product $X$ in $2^{\omega_1}$ that are not first countable. Our next ZFC example
does have this stronger property, but to get there we need some preparatory work.

\begin{lemma}\label{lm:nwsep}
Let $X$ be any non-separable CCC space. Then there is a regular closed subset $F$ of $X$
that is nowhere separable as a subspace.
\end{lemma}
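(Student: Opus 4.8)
The plan is to isolate the non-separable part of $X$ by discarding a maximal separable open region. First I would fix a maximal pairwise disjoint family $\mathcal{S} = \{S_n : n < \omega\}$ of nonempty open separable subsets of $X$; this family is countable precisely because $X$ is CCC. Since a countable union of separable spaces is separable, the open set $G = \bigcup_{n<\omega} S_n$ is separable, and hence so is its closure $\overline{G}$. As $X$ is not separable, $\overline{G}$ cannot be all of $X$, so $U = X \setminus \overline{G}$ is a nonempty open set. I would then set $F = \overline{U}$, which is regular closed by construction.

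The remaining task is to verify that $F$ is nowhere separable as a subspace, i.e.\ that no nonempty relatively open subset of $F$ is separable (equivalently, in the paper's phrasing, that every countable subset of $F$ is nowhere dense). Suppose for contradiction that $W \cap F$ is a nonempty separable relatively open piece of $F$, where $W$ is open in $X$. Since $W$ meets $F = \overline{U}$ and $W$ is open, it must already meet $U$, so $W \cap U$ is a nonempty open subset of $X$. The key point is that $W \cap U = (W \cap F) \cap U$ is open in the separable space $W \cap F$ (because $U$ is open in $X$), and an open subspace of a separable space is separable; hence $W \cap U$ is a nonempty open separable subset of $X$. But $W \cap U \subseteq U$ is disjoint from $G \supseteq \bigcup \mathcal{S}$, so adjoining $W \cap U$ to $\mathcal{S}$ would produce a strictly larger pairwise disjoint family of nonempty open separable sets, contradicting maximality.

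The main obstacle I anticipate is the verification in the second step, and in particular keeping straight the two separability facts it relies on: that the trace $W \cap U$ is genuinely \emph{open} inside $W \cap F$ (so that separability transfers from $W \cap F$ down to it), and that $W \cap U$ is nonempty whenever $W$ meets the closure $\overline{U}$. Once these are in place, the contradiction with maximality is immediate, so everything hinges on correctly setting up the maximal family in the first step together with the elementary observation that open subspaces inherit separability.
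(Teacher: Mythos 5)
Your proof is correct and follows essentially the same route as the paper: take a maximal disjoint family of separable open sets (countable by CCC), remove the closure of its separable union, and take the closure of the nonempty open remainder; your second paragraph simply spells out the "by maximality" step that the paper leaves implicit. No gaps.
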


\begin{proof}
Let $\mathcal{U}$ be a maximal disjoint family of separable open subsets of $X$. Then
$\overline{\cup \mathcal{U}}$ is separable, since $\mathcal{U}$ is countable.
But then $V = X \setminus \overline{\cup \mathcal{U}}$ is nowhere separable by the maximality
of $\mathcal{U}$, hence so is $F = \overline{V}$.
\end{proof}

\begin{lemma}\label{lm:M1}
Let $X$ be a first countable and non-separable CCC space.
Then there is a nowhere separable
closed CCC subspace $Z$ of $X$ with $d(Z) = \omega_1$.
\end{lemma}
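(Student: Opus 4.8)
The plan is to first pass to a well-behaved closed piece of $X$ and then to carve out $Z$ by a recursion of length $\omega_1$.

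First I would reduce to the case where $X$ itself is nowhere separable. By Lemma~\ref{lm:nwsep} there is a regular closed set $F=\overline V$ (with $V$ open) in $X$ that is nowhere separable; it is first countable as a subspace and, being nonempty, satisfies $d(F)\ge\omega_1$. It is also CCC: if $\{G_i\}$ were an uncountable cellular family in $F$, then, since every nonempty relatively open subset of $\overline V$ meets the dense set $V$, the traces $G_i\cap V$ would form an uncountable cellular family of open subsets of $X$, contradicting the CCC of $X$. So I may assume $X=F$ is nowhere separable, CCC and first countable. It then suffices to find $D\in[F]^{\omega_1}$ with $Z=\overline D$ nowhere separable and CCC: nowhere separability forces $d(Z)\ge\omega_1$, while $|D|=\omega_1$ gives $d(Z)\le\omega_1$, so $d(Z)=\omega_1$ automatically.

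For nowhere separability I would build $D=\bigcup_{\xi<\omega_1}D_\xi$ as an increasing continuous union of countable sets, adding one point $z_\xi$ at each successor step. Fix a decreasing local base $\{B(x,n):n<\omega\}$ at each $x$. Since an open subspace of a separable space is separable, $Z$ is nowhere separable as soon as each $B(z,n)\cap Z$ with $z\in D$ is non-separable; and since $F$ is first countable, each countable $A\subseteq Z=\overline D$ is contained in $\overline{D_\beta}$ for some $\beta<\omega_1$. The construction aims at the escape condition: for all $\beta$, all $z\in D_\beta$ and all $n$ there is $\xi>\beta$ with $z_\xi\in B(z,n)\setminus\overline{D_\beta}$. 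Such a point is available because the closure of a countable set in a nowhere separable space has empty interior, so $(B(z,n)\cap F)\setminus\overline{D_\beta}$ is a nonempty open set; a routine bookkeeping of the $\omega_1$ tasks $(\beta,z,n)$ realises all of them. Granting this, $Z$ is nowhere separable: if $A\subseteq B(z,n)\cap Z$ were countable and dense, choose $\beta$ with $z\in D_\beta$ and $A\subseteq\overline{D_\beta}$; the escaping $z_\xi$ then lies in $(B(z,n)\cap Z)\setminus\overline A$, a contradiction.

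The genuine difficulty is to make $Z$ CCC, and this is the step I expect to be the main obstacle. CCC is not inherited for free: a closed subspace of a first countable CCC (even separable) space need not be CCC, the antidiagonal of the Sorgenfrey plane being a closed, uncountable, discrete example; moreover the escape condition above makes $\{z_\xi:\xi<\omega_1\}$ left-separated, which is precisely the pattern that can wreck the CCC. Hence the latitude in choosing $z_\xi$ inside the open set $(B(z,n)\cap F)\setminus\overline{D_\beta}$ must be used to guarantee, in addition, that every cellular family of the final $Z$ reflects to a cellular family of open subsets of $F$, which, $F$ being CCC, is then countable. Arranging this reflection while keeping the points left-separated enough to defeat all countable candidate dense sets is the crux: the two requirements pull against each other exactly as in the Suslin line, where $c=\omega<d=\omega_1$, and balancing them is where the real work of the proof lies.
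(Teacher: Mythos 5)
Your proposal sets up the right kind of $\omega_1$-recursion and correctly proves the nowhere separability and density parts, but it stops exactly where the lemma's real content begins: you name the CCC of $Z$ as ``the crux'' and ``where the real work of the proof lies'' without supplying an argument. That is a genuine gap, not a routine verification --- as your own Sorgenfrey antidiagonal example shows, closed subspaces of first countable separable CCC spaces can fail CCC badly, so some additional device is mandatory.

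The paper's device is a closure operation on countable sets. Fix a countable base $\mathcal B_x$ at each $x$ and call $S$ \emph{$\mathcal B$-full} if for all $x,x'\in S$, $B\in\mathcal B_x$, $B'\in\mathcal B_{x'}$ with $B\cap B'\ne\emptyset$ one has $B\cap B'\cap S\ne\emptyset$; every countable set extends to a countable $\mathcal B$-full set (closure argument or elementary submodel). The recursion takes $A_{\alpha+1}=S(A_\alpha\cup\{x_\alpha\})$ with $x_\alpha\notin\overline{A_\alpha}$, and $Y=\overline{\bigcup_\alpha A_\alpha}$. Then CCC of $Y$ reflects to CCC of $X$: given uncountably many open $U_\alpha$ meeting $Y$, pick $a_\alpha\in A\cap U_\alpha$ and $B_\alpha\in\mathcal B_{a_\alpha}$ with $B_\alpha\subset U_\alpha$; CCC of $X$ gives $B_\alpha\cap B_\beta\ne\emptyset$ for some $\alpha\ne\beta$, and $\mathcal B$-fullness of an $A_{\gamma+1}$ containing $a_\alpha,a_\beta$ puts a point of $A\subset Y$ into $B_\alpha\cap B_\beta$, so the traces on $Y$ are not disjoint. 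This is precisely the ``reflection of cellular families'' you asked for, and it coexists peacefully with left-separation because fullness only adds countably many witnesses at each stage. Note also that the paper reverses your order of operations: it does \emph{not} try to make the recursion produce nowhere separability directly (your escape conditions for all triples $(\beta,z,n)$ are an unnecessary burden); it only arranges $d(Y)=\omega_1$ and CCC, and then applies Lemma~\ref{lm:nwsep} to $Y$ at the very end to extract the nowhere separable regular closed $Z\subset Y$, which is automatically CCC with $d(Z)=\omega_1$. You should restructure your argument accordingly and incorporate the fullness closure; without it the lemma is not proved.
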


\begin{proof}
Let us  fix for every point $x \in X$ a countable neighborhood base $\mathcal{B}_x$.
We shall say that a set $S \subset X$ is {\em $\mathcal{B}$-full} if for any $x, x' \in S$
and $B \in \mathcal{B}_x,\,\, B' \in \mathcal{B}_{x'}$, $\,\,B \cap B' \ne \emptyset$
implies $\,\,B \cap B' \cap S \ne \emptyset$.

\begin{claim}
For every countable $A \subset X$ there is a countable $\mathcal{B}$-full set $S(A)$ with $A \subset S(A)$.
\end{claim}

This can be shown by a standard closure argument but the simplest way to prove it is by taking a countable
elementary submodel $M$ of $H_\vartheta$ for a large enough regular cardinal $\vartheta$
that contains $A, X$, and the map $x \rightarrowtail \mathcal{B}_x$, and then putting $S(A) = X \cap M$.

Using this claim we then define by transfinite recursion on $\alpha < \omega_1$ an increasing
sequence $\<A_\alpha : \alpha < \omega_1\>$ of countable subsets of $X$ as follows.

We let $A_0$ be an arbitrary countably infinite subset of $X$. If $\alpha$ is limit we just put
$A_\alpha = \bigcup \{A_\beta : \beta < \alpha\}$.
If $A_\alpha$ is given then we first choose $x_\alpha \in X$ such that $x_\alpha \notin \overline{A_\alpha}$
and then let $A_{\alpha+1} = S(A_\alpha \cup \{x_\alpha\})$. (This uses that $X$ is non-separable.)

Having completed the recursion, we let $A = \bigcup \{A_\alpha : \alpha < \omega_1\}$ and $Y = \overline{A}$.
Note that we have $Y = \bigcup \{\overline{A_\alpha} : \alpha < \omega_1\}$ because $X$ is first countable.

Since $|A| = \omega_1$, we have $d(Y) \le \omega_1$. On the other hand, every countable subset $H$ of $Y$
is included in $\overline{A_\alpha}$ for some $\alpha < \omega_1$, and
then $x_\alpha \notin \overline{A_\alpha}$
shows that $H$ is not dense in $Y$. So, indeed, $d(Y) = \omega_1$.

To see that $Y$ is CCC, we consider any collection  $\{U_\alpha : \alpha < \omega_1\}$ of open subsets of
$X$ such that $U_\alpha \cap Y \ne \emptyset$ for all $\alpha < \omega_1$.
Then, as $A$ is dense in $Y$, we may pick $a_\alpha \in A \cap U_\alpha$ and then $B_\alpha \in \mathcal{B}_{a_\alpha}$
with $B_\alpha \subset U_\alpha$ for each $\alpha < \omega_1$.
 
Since $X$ is CCC, there are distinct $\alpha, \beta < \omega_1$ such that $B_\alpha \cap B_\beta \ne \emptyset$.
We may then pick $\gamma < \omega_1$ for which both $a_\alpha, a_\beta \in A_\gamma$. This, in turn,
implies $A_{\gamma+1} \cap B_\alpha \cap B_\beta \ne \emptyset$ because $A_{\gamma+1}$ is $\mathcal{B}$-full.
Consequently, we have $A \cap U_\alpha \cap U_\beta \ne \emptyset$ as well.

Finally, we may apply Lemma \ref{lm:nwsep} to obtain a regular closed subset $Z$ of $Y$ that
is nowhere separable. Clearly, then $Z$ is also CCC and $d(Z) = d(Y) = \omega_1$.
\end{proof}

We are now ready to present our final result that, in addition to A. Dow's above mentioned deep theorem,
makes use of another deep result due to M. Bell~\cite{B}.

\begin{theorem}\label{tm:Bell}
There is a first countable, nowhere separable, $\sigma$-compact and non-compact CCC space $X$
of density $\omega_1$. Then $\pi(X) = \omega_1$ as well, hence it has a remote point $p$. Consequently,
$X \cup \{p\}$, taken as a subspace of $\beta X$, is not countably tight, while
the closures of all its discrete subsets are first countable.
\end{theorem}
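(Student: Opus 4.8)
The plan is to build the space $X$ from a compact ``seed'' and then read off all the remaining conclusions almost for free, the genuinely deep input being the existence of a suitable seed. This is where M. Bell's theorem in \cite{B} enters: it provides, in ZFC, a first countable, non-separable, CCC compact $T_2$-space $K$. Everything after that is assembling the machinery already prepared above, together with A. Dow's remote-point theorem as packaged in Proposition \ref{pr:rem}.

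First I would feed $K$ into Lemma \ref{lm:M1}: since $K$ is first countable, non-separable and CCC, the lemma yields a closed subspace $Z \subseteq K$ that is nowhere separable, CCC, and has $d(Z) = \omega_1$. Being closed in a compact space, $Z$ is compact, and as a subspace of a first countable space it is first countable. Since $Z$ is compact, to obtain a non-compact yet $\sigma$-compact example I would pass to the topological sum $X = \omega \times Z$ (with $\omega$ discrete). Each relevant property is inherited by a countable topological sum: $X$ is first countable; $\sigma$-compact, being a countable union of clopen compact copies; non-compact, since the projection onto $\omega \subseteq \reals$ is continuous and unbounded; nowhere separable, as every nonempty open set contains a nonempty open piece of some copy of $Z$; and CCC, because an uncountable disjoint family of open sets would, by pigeonhole, crowd uncountably many members into a single summand, contradicting the CCC-ness of $Z$. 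Finally $d(X) = \omega \cdot d(Z) = \omega_1$.

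Next I would verify $\pi(X) = \omega_1$ by a soft cardinal-function computation. In general $d(X) \le \pi(X) \le d(X) \cdot \chi(X)$: the lower bound comes from choosing one point in each member of a $\pi$-base, and the upper bound from collecting fixed countable local bases over a dense set. Since $d(X) = \omega_1$ and $\chi(X) = \omega$, both bounds squeeze $\pi(X)$ to $\omega_1$; in particular $\pi(X) \le \omega_1$, and $X$ is non-pseudocompact. Thus $X$ meets exactly the hypotheses of Proposition \ref{pr:rem}, which hands us a remote point $p$ and the subspace $Y = X \cup \{p\} \subseteq \beta X$ such that $p \notin \overline{A}$ for every $A \subseteq X$ that is countable or discrete.

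The two final conclusions then follow. Since $X$ is dense in $\beta X$ we have $p \in \overline{X}^{Y}$, yet $p$ lies in the closure of no countable subset of $X$, so the tightness of $Y$ at $p$ is uncountable and $Y$ is not countably tight. For the discrete subsets, let $D \subseteq Y$ be discrete and set $D' = D \setminus \{p\}$, a discrete, hence nowhere dense, subset of the crowded space $X$. Because $p$ is a remote point, $p \notin \overline{N}^{\beta X}$ for every nowhere dense $N \subseteq X$; applying this to the closed nowhere dense set $\overline{D'}^{X}$ produces a neighbourhood of $p$ missing it, so $p$ is isolated in $\overline{D}^{Y}$. Hence $\overline{D}^{Y}$ is, up to the possible addition of the isolated point $p$, a subspace of the first countable space $X$, and is therefore first countable. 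The whole obstacle is front-loaded into Bell's theorem: once a first countable non-separable CCC compact space is in hand, the rest is routine inheritance of properties together with the already-established Proposition \ref{pr:rem}.
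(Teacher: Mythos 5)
Your overall architecture --- Bell's space, then Lemma \ref{lm:M1}, then Dow's remote point packaged as Proposition \ref{pr:rem}, then the closure analysis at $p$ --- is the same as the paper's, and your final two paragraphs (the squeeze $d(X)\le\pi(X)\le d(X)\cdot\chi(X)$ giving $\pi(X)=\omega_1$, and the verification that $p$ is isolated in the closure of any discrete subset, so that such closures are first countable) are correct and in fact more explicit than what the paper writes down. However, there is one genuine error at the very start: Bell's theorem in \cite{B} does \emph{not} produce a compact space. It produces a first countable, non-separable, CCC space that is $\sigma$-compact (and normal) but not compact; the existence in ZFC of a \emph{compact} first countable CCC non-separable space is a well-known open problem (consistently such spaces exist, e.g.\ a compact Suslin line). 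So you cannot start from a compact seed $K$, and the sentence ``Being closed in a compact space, $Z$ is compact'' has no basis.

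Fortunately your construction survives the correction essentially unchanged: feed Bell's $\sigma$-compact space into Lemma \ref{lm:M1} to obtain a closed (hence still $\sigma$-compact), nowhere separable, first countable CCC subspace $Z$ with $d(Z)=\omega_1$, and your topological sum $X=\omega\times Z$ is then $\sigma$-compact, non-compact, and inherits all the other properties exactly as you argue. The paper instead handles non-compactness by a case split: if the subspace produced by Lemma \ref{lm:M1} is already non-compact it is used directly, and if it happens to be compact one deletes a single point, using first countability (points are $G_\delta$'s) to see that the complement is $\sigma$-compact, non-compact, and retains the remaining properties. Your sum trick is a clean, uniform alternative that avoids the case distinction; just repair the statement of Bell's theorem.
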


\begin{proof}
M. Bell constructed in \cite{B} a first countable, non-separable, $\sigma$-compact CCC space~$Y$.
By Lemma \ref{lm:M1}, $Y$ has a nowhere separable CCC closed subspace $X$ with density $d(X) = \omega_1$.
If $X$ is not compact we are done.

If, however, $X$ happens to be compact then, as $X$ is first countable, for any point $x \in X$ its
complement $X \setminus \{x\}$ is $\sigma$-compact and not compact, and inherits all the other
properties of $X$.
\end{proof}

{\em Acknowledgment.} The research on this work was done during a visit of the second author at
the HUN-REN Rényi Institute of Mathematics. He thanks the Institute for its hospitality, as well
as the Mathematics Section of the Hungarian Academy of Sciences for supporting financially
this visit.

\def\cprime{$'$}
\makeatletter \renewcommand{\@biblabel}[1]{\hfill[#1]}\makeatother


\begin{thebibliography}{20}


\bibitem{ATW}
O. ~T. Alas, V. ~V. Tkachuk and R. ~G. Wilson, {\em Closures of discrete sets often reflect
global properties}, Topology Proc., 25 (2000), 27--44.

\bibitem{B}
M. Bell, {\em A normal first countable CCC nonseparable space},
Proc. AMS, Volume 74, Number 1, April 1979, 151--155.

\bibitem{D}
A. Dow, {\em Remote points in spaces with $\pi$-weight $\omega_1$},
Fund. Math. 124 (1984), 197–-205.


\bibitem{DTTW}
A. Dow, M. ~G. Tkachenko, V. ~V. Tkachuk and R. ~G. Wilson, {\em Topologies generated by discrete subspaces},
Glasnik Mat., Vol. 37(57) (2002), 189–-212.




\bibitem{J}
I. ~Juhász, {\em Cardinal functions in topology -- ten years later},
Math. Centre Tracts 123, Amsterdam, 1980.


\bibitem{Ku}
K. Kunen, {\em A compact L-space under CH},
Top. Appl., Volume 12, Issue 3, September 1981, 283--287.

\bibitem{vM}
J. van Mill, {\em Weak $P$-points in \v{C}ech-Stone compactifications},
Trans. Amer. Math. Soc., 273 (1982), 657--678.


\bibitem{M}
J. ~T. ~Moore, A solution to the L space problem, J. Amer. Math. Soc., 19 (2005),
717–-736.


\bibitem{Sh1}
B. Shapirovskii, {\em Canonical sets and character. Density and weight in compact spaces},
Soviet Math. Dokl., 15 (1974), 1282--1287.

\bibitem{VTk}
V.~V. Tkachuk, {\em Discrete reflexivity in topological groups and function spaces},
Acta Math. Hungarica, to appear
\end{thebibliography}
\end{document}